\newcommand{\abs}[1]{\left\vert#1\right\vert}
\newcommand{\norm}[1]{\left\Vert#1\right\Vert}
\newcommand{\D}{\mathcal{D}}
\newcommand{\Po}{\mathcal{P}}
\newcommand{\Q}{\mathcal{Q}}
\newcommand{\R}{\mathcal{R}}
\newcommand{\U}{\mathbb{U}}
\newcommand{\V}{\mathbb{V}}
\newcommand{\W}{\mathbb{W}}
\newcommand{\X}{\mathbb{X}}
\newcommand{\grad}{\nabla}
\DeclareMathOperator{\curl}{curl}
\DeclareMathOperator{\rank}{rank}
\DeclareMathOperator{\Nl}{Null}
\DeclareMathOperator{\im}{image}
\DeclareMathOperator{\Hom}{Hom}
\newcommand{\f}{\varphi}
\newcommand{\LA}{\Lambda}
\newcommand{\OM}{\Omega}
\newcommand{\om}{\omega}
\newcommand{\al}{\alpha}
\newcommand{\g}{\gamma}
\newcommand{\bt}{\beta}
\newcommand{\z}{\zeta}
\newcommand{\s}{\psi}
\newcommand{\la}{\lambda}
\newtheorem{theorem}{Theorem}[section]
\newaliascnt{lemma}{theorem}
\newtheorem{lemma}[lemma]{Lemma}
\newaliascnt{ques}{theorem}
\newtheorem{ques}[ques]{Question}
\newaliascnt{prop}{theorem}
\newtheorem{prop}[prop]{Proposition}
\theoremstyle{definition}
\newtheorem{example}[theorem]{Example}
\numberwithin{equation}{section}
\begin{document}

\title{Elliptic complexes and generalized Poincar\'{e} inequalities}

\author{Derek Gustafson} \address{Department of Mathematics, Syracuse University, Syracuse, NY 13210}
\email{degustaf@syr.edu}

\copyrightinfo{\currentyear} {Derek Gustafson}

\subjclass[2000]{Primary 35J45; Secondary 35B45, 58J10}
\keywords{Elliptic Complexes, Poincar\'{e} Inequality, Constant Rank}
\date{\today}

\begin{abstract} We study first order differential operators $\Po = \Po(D)$ with constant coefficients.  The main question is under what conditions a generalized Poincar\'{e} inequality holds $$\norm{ D(f-f_0)}_{L^p} \leq C \norm{ \Po f}_{L^p}, \hspace{.5in} \textrm{for some } f_0 \in \ker \Po.$$ We show that the constant rank condition is sufficient, \autoref{gen_inv}.  The concept of the Moore-Penrose generalized inverse of a matrix comes into play.  \end{abstract}

\maketitle

\section{Elliptic Complexes}
Let $\U$, $\V$, and $\W$ be finite dimensional inner product spaces, whose inner products are denoted by $\left<\; , \;\right>_\U$, $\left< \; , \; \right>_\V$, and $\left< \; , \; \right>_\W$ respectively, or just $\left< \; , \; \right>$ when the space is clear. Let $\Po$ and $\Q$ be the first order differential operators with constant coefficients $$\Po= \sum_{i=1}^n A_i \frac{\partial}{\partial x_i}, \hspace{1.2in} \Q= \sum_{i=1}^n B_i \frac{\partial}{\partial x_i},$$ where the $A_i$ are linear operators from $\U$ to $\V$ and the $B_i$ are linear operators from $\V$ to $\W$.  We will use $$\Po(\xi) = \sum_{i=1}^n \xi_i A_i, \hspace{.5in} \textrm{and} \hspace{.5in} \Q(\xi)=\sum_{i=1}^n \xi_i B_i$$ to denote the symbols of $\Po$ and $\Q$, respectively.  We denote by $\D'(\Bbb R^n, \V)$ the space of distributions valued in $\V$.  We define a \textit{short elliptic complex of order 1 over $\Bbb R^n$} to be $$\begin{CD} \D'(\Bbb R^n, \U) @>\Po>> \D'(\Bbb R^n, \V) @>\Q>> \D'(\Bbb R^n, \W) \end{CD}$$ such that the \textit{symbol complex} $$\begin{CD} \U @>\Po(\xi)>> \V @>\Q(\xi)>> \W \end{CD}$$ is exact for all $\xi \neq 0 \in \Bbb R^n$.

There are two classical structures that fit into this framework and provide our motivation for studying elliptic complexes in general.
\begin{example} For $l=0, \dots, n$, let $\LA^l = \LA^l(\Bbb R^n)$ be the space of $l$-covectors on $\Bbb R^n$, that is the vector space with basis elements $dx_{i_1}\wedge \dots \wedge dx_{i_l}$, $1 \leq i_1 < \dots <i_l \leq n$, where all choices of subsets of $\{x_1, \dots, x_n\}$ of cardinality $l$ are used.  By convention, $\LA^0=\Bbb R$ with basis element $1$.  Then we have the Grassmann Algebra $\LA = \LA(\Bbb R^n) = \bigoplus \LA^l(\Bbb R^n)$; that is, the space of  covectors on $\Bbb R^n$.  The relevant elliptic complex is $$\begin{CD} \D'(\Bbb R^n, \LA) @>d>> \D'(\Bbb R^n, \LA) @>d>> \D'(\Bbb R^n, \LA)\end{CD},$$  where $d$ is exterior differentiation which is defined by $$d\left( f dx_{i_1} \wedge \dots \wedge dx_{i_l} \right) = df \wedge dx_{i_1} \wedge \dots \wedge dx_{i_l} = \sum_{j=1}^n \frac {\partial f}{\partial x_j} dx_j \wedge dx_{i_1} \wedge \dots \wedge dx_{i_l}$$ on the basis elements and extended by linearity to $\LA$.  \end{example}

\begin{example} The other classical example, which has led to the recent use of elliptic complexes in the study of PDE's is $$\begin{CD} \D'(\Bbb R^n, \Bbb R) @>\grad>> \D'(\Bbb R^n, \Bbb R^n) @>\curl>> \D'(\Bbb R^n, \Bbb R^{n\times n}_\textrm{skew}) \end{CD}.$$  Here $\Bbb R^{n\times n}_\textrm{skew}$ is the space of $n$ by $n$ skew symmetric matrices and $\curl$ is the rotation operator given by $$\curl\; (f_1, \dots, f_n) = \left[ \frac {\partial f_i} {\partial x_j}  - \frac {\partial f_j} {\partial x_i}  \right]_{i,j}.$$  \end{example}

From an elliptic complex, we form the adjoint complex $$\begin{CD} \D'(\Bbb R^n, \W) @>\Q^*>> \D'(\Bbb R^n, \V) @>\Po^*>> \D'(\Bbb R^n, \U)\end{CD}.$$ Here $\Po^*$ is the formal adjoint defined by $$\int_{\Bbb R^n} \left<\Po^*f,g\right>_{\U} = \int_{\Bbb R^n} \left<f, \Po g\right>_{\V}$$ for $f \in C^\infty_0(\Bbb R^n, \V)$ and $g \in C^\infty_0(\Bbb R^n, \U)$.  So, we have \begin{equation}\label{adj_def}\Po^* = - \sum_{i=1}^n A^*_i \frac{\partial}{\partial x_i},\end{equation} and similarly for $\Q^*$.  Here, we have identified $\U^*$, $\V^*$, and $\W^*$ with $\U$, $\V$ and $\W$, respectively, by use of their inner products.  Note that the adjoint complex is elliptic if and only if the original complex is.

From this, we define an associated second order \textit{Laplace-Beltrami Operator} by $$\triangle = \triangle_{\V} = -\Po \Po^* - \Q^*\Q: \D'(\Bbb R^n, \V) \to \D'(\Bbb R^n, \V),$$ with symbol denoted by $\triangle(\xi):\V \to \V$.  Linear Algebra shows that for every $v\in \V$, $\left< -\triangle(\xi) v, v \right> = \abs{\Po^*(\xi)v}^2 + \abs{\Q(\xi)v}^2 \geq 0$.  That equality only occurs when $\xi=0$ follows from the definition of an elliptic complex.  Thus, the linear operator $\triangle(\xi): \V \to \V$ is invertible for $\xi\neq0$.  We also have that as a function in $\xi$, $\triangle(\xi)$ is homogeneous of degree $2$.  So, letting $$c= \max_{\abs{\xi}=1} \norm{\triangle^{-1}(\xi): \V \to \V}, $$ we get the estimate $$\norm{\triangle^{-1}(\xi)} \leq c \abs{\xi}^{-2}.$$  So, solving the Poisson Equation $$\triangle \f =F$$ with $F \in C_0^\infty(\Bbb R^n, \V)$, we find the second derivatives of $\f$ by noting that $$\widehat{\frac {\partial^2 \f} {\partial x_i \partial x_j}} (\xi) = \xi_i\xi_j \triangle^{-1}(\xi) \widehat{F}(\xi).$$  Since $\xi_i\xi_j \triangle^{-1}(\xi): \V \to \V$ is bounded, this gives rise to a Calder\'{o}n-Zygmund type singular integral operator, $R_{ij}F = \frac {\partial^2} {\partial x_i \partial x_j}\f$ which is bounded on $L^p$ for $1<p<\infty$.  We will refer to these as the second order Riesz type transforms, due to the similarities with the classical Riesz transforms.  A detailed discussion of Calder\'{o}n-Zygmund singular integral operators and, in particular, the classical Riesz transforms can be found in \cite{Stein70}.

We will use $$W^{k,p}(\Bbb R^n)= \left\{f: \sum_{\abs{\al}\leq k} \norm{\frac {\partial^{\abs{\al}}}{\partial x^\al}f}_p <\infty \right\}$$ to denote the classical Sobolev spaces, and $$L^{k,p}(\Bbb R^n) = \left\{f: \norm{f}_{k,p} = \sum_{\abs{\al}=k} \norm{\frac {\partial^{\abs{\al}}}{\partial x^\al}f}_p <\infty \right\} \subset W^{k,p}_{\textrm{loc}}(\Bbb R^n)$$ to denote the space of distributions with all $k^{th}$ order derivatives in $L^p(\Bbb R^n)$.  Note that we can approximate a $L^{k,p}(\Bbb R^n)$ function by $W^{k,p}(\Bbb R^n)$ functions.  This is accomplished by taking $f\in L^{k,p}(\Bbb R^n)$ and multiplying by $\s_n \in C^\infty_0(\Bbb R^n)$ where $0 \leq \s_n \leq 1$, $\s_n=1$ on the ball about $0$ of radius $n$, and has its support contained in the ball about $0$ of radius $2n$.  As $n$ goes to infinity, $f \s_n$ converges to $f$ in the $L^{k,p}$ norm.  So, in this notation, the Poisson Equation $$\triangle \f =F$$ with $F \in L^p(\Bbb R^n)$ is solvable for $\f \in L^{2,p}(\Bbb R^n)$.  Also, since $C^\infty_0(\Bbb R^n, \V)$ is dense in $L^{1,p}(\Bbb R^n, \V)$ and $\Po^*$ and $\Q$ are continuous under the $L^{1,p}$ seminorm, we can extend them by continuity to all of $L^{1,p}(\Bbb R^n, \V)$.  Similarly, $\Po$ and $\Q^*$ can be defined on $L^{1,p}(\Bbb R^n, \U)$ and $L^{1,p}(\Bbb R^n, \W)$, respectively.

We refer the reader to \cite{Donofrio_Iwaniec03}, \cite{Giannetti_Verde00}, \cite{Tarkhanov95}, and \cite{Uhlenbeck77} for further reading on elliptic complexes.   

\section{Main Question}
We begin by recalling the classical Poincar\'{e} Inequality
\begin{theorem} For each $ f \in \D'(\Bbb R^n)$ such that $\grad f \in L^p(\Bbb R^n)$and each ball $B\subset \Bbb R^n$, there exists a constant $f_B$ such that $$\int_B \abs{ f- f_B}^p \leq C \int_B \abs{\grad  f}^p.$$  We view $f_B$ as an element in $\D'(\Bbb R^n)$ with $\grad  f_B=0$.
\end{theorem}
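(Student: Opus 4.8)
The plan is to take the constant $f_B$ to be the mean value $f_B = \frac{1}{\abs{B}}\int_B f$, to prove the inequality first for $f \in C^\infty(\Bbb R^n)$ by an integral representation of $f-f_B$ in terms of $\grad f$, and then to remove the smoothness assumption by mollification.

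For smooth $f$ and $x \in B$, since $f_B$ is the average of $f$ over $B$ we have $f(x)-f_B = \frac{1}{\abs{B}}\int_B (f(x)-f(y))\,dy$, and the fundamental theorem of calculus along the segment from $x$ to $y$ gives
$$\abs{f(x)-f_B} \le \frac{1}{\abs{B}}\int_B\int_0^{\abs{y-x}}\abs{\grad f(x+r\om)}\,dr\,dy, \qquad \om=\frac{y-x}{\abs{y-x}}.$$
Writing the $y$-integral in polar coordinates centred at $x$, extending $\abs{\grad f}$ by zero outside $B$, and estimating the radial variables by $d=\operatorname{diam}B$, the right-hand side is bounded by $\frac{d^n}{n\abs{B}}\int_B \abs{\grad f(z)}\,\abs{x-z}^{-(n-1)}\,dz$; thus $\abs{f-f_B}$ is controlled on $B$ by a Riesz-type potential of $\abs{\grad f}$.

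The second step is the $L^p$ estimate for that potential, and here it is lighter than a general Calder\'{o}n--Zygmund argument. Because $x-z$ ranges over the ball of radius $d$ when $x,z\in B$, the kernel $\abs{w}^{-(n-1)}$ may be replaced by the truncated kernel $K(w)=\abs{w}^{-(n-1)}\mathbf{1}_{\{\abs{w}\le d\}}$, which lies in $L^1(\Bbb R^n)$ with $\norm{K}_1 = \abs{S^{n-1}}\,d$. Young's convolution inequality then yields $\norm{K * g}_{L^p}\le\norm{K}_1\norm{g}_{L^p}$ with $g=\abs{\grad f}\mathbf{1}_B$, and combining with the pointwise bound gives
$$\norm{f-f_B}_{L^p(B)} \le C\,\norm{\grad f}_{L^p(B)}$$
with $C$ independent of $f$ (tracking constants, or a scaling check, gives $C=C(n)\,d^p$ after raising to the $p$-th power, which is the stated form).

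Finally I would pass from $C^\infty$ to an arbitrary $f\in\D'(\Bbb R^n)$ with $\grad f\in L^p(\Bbb R^n)$ by mollifying, $f_\varepsilon=f*\rho_\varepsilon$: then $\grad f_\varepsilon=(\grad f)*\rho_\varepsilon\to\grad f$ in $L^p$, so applying the smooth inequality to the differences $f_\varepsilon-f_{\varepsilon'}$ shows that $f_\varepsilon-(f_\varepsilon)_B$ is Cauchy in $L^p(B)$. Since $f_\varepsilon\to f$ in $\D'$ while $(f_\varepsilon)_B$ is a sequence of constants, $(f_\varepsilon)_B$ converges to some $c\in\Bbb R$, the distribution $f$ agrees on $B$ with $c$ plus the $L^p(B)$-limit of $f_\varepsilon-(f_\varepsilon)_B$, and letting $\varepsilon\to0$ in the smooth estimate (with $f_B:=c$) gives the theorem. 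I expect the only point needing real care to be this last one --- that the bare hypothesis $\grad f\in L^p$ already forces $f$ to be represented by a locally $L^p$ function, so that $\int_B\abs{f-f_B}^p$ is meaningful; the substance of the argument is the elementary integral representation and the observation that the truncated potential kernel is honestly integrable, so that only Young's inequality, and not singular-integral theory, is needed.
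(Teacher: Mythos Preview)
The paper does not actually prove this theorem; it is simply recalled as the classical Poincar\'{e} inequality in order to motivate the main question and the generalized inequalities that follow. There is therefore no proof in the paper to compare your argument against.

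That said, your proposal is a correct and entirely standard proof. The pointwise bound of $\abs{f-f_B}$ by a Riesz-type potential of $\abs{\grad f}\,\mathbf{1}_B$ via polar coordinates is classical, and your observation that on a bounded ball the truncated kernel $\abs{w}^{-(n-1)}\mathbf{1}_{\{\abs{w}\le d\}}$ lies in $L^1(\Bbb R^n)$ lets Young's inequality finish the job without any singular-integral machinery. The mollification passage is also sound, and you correctly flag the one genuine subtlety---that the bare hypothesis $\grad f\in L^p$ forces $f$ to be represented by a locally $L^p$ function---which your Cauchy-sequence argument in $L^p(B)$ in fact establishes cleanly. One minor remark: as your own scaling check shows, the constant $C$ necessarily depends on the diameter of $B$; the paper's statement leaves this implicit.
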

This leads to our main question:
\begin{ques}For what partial differential operators $\Po$ of order $k$ is it true that for every $f \in \D'(\Bbb R^n, \U)$ such that $\Po f \in L^p(\Bbb R^n, \V)$, there exists $f_0\in \D'(\Bbb R^n, \U)$ such that $\Po f_0=0$ and \begin{equation}\label{est1}\norm{f-f_0}_{k,p} \leq C \norm{\Po f}_p?\end{equation}
\end{ques}
Notice that with the change from $\grad$ to $\Po$ we also had to change some other details.  First, there is no need for the ball that appears in the classical theorem, our methods have been able to achieve global estimates.  But, our estimates are on the $k^{th}$ order partial derivatives of $f$, not $f$ itself.  The local $L^p$ estimates of $f-f_0$ will follow from \autoref{est1} by the usual Poincar\'{e} inequality.  We will confine our investigations to the case $k=1$.

%Our interest in this question comes from a result due to Giannetti and Verde \cite{Giannetti_Verde00}.  There is a notion of a Jacobian that is associated with a short elliptic complex.  In trying to prove that this Jacobian is weakly continuous, they needed a Poincar\`{e} type inequality as in our question where $\Po$ is a part of the complex.  And, they were only able to achieve this under the assumption that the complex can be extended to $$\D'(\Bbb R^n, \X) \xto{\R} \D'(\Bbb R^n, \U) \xto{\Po} \D'(\Bbb R^n, \V) \xto{\Q} \D'(\Bbb R^n, \W).$$

We present two theorems as partial answers to this question.  The first is already known, see \cite{Giannetti_Verde00} for example,and uses the methods of elliptic complexes to attack the problem. The second theorem takes a more direct approach which allows for a more general result.

\begin{theorem}\label{Elliptic_complex}
Let $1<p<\infty$, and let $$\begin{CD} \D'(\Bbb R^n, \X) @>\R>> \D'(\Bbb R^n, \U) @>\Po>> \D'(\Bbb R^n, \V) @>\Q>> \D'(\Bbb R^n, \W)\end{CD}$$ be an elliptic complex of order 1, and let $f \in \D'(\Bbb R^n, \U)$ such that $\Po f\in L^p(\Bbb R^n, \V)$.  Then there exists $f_0 \in \D'(\Bbb R^n, \U) \cap \ker \Po$ with $$\norm{f-f_0}_{1,p} \leq C \norm{\Po f}_p.$$
\end{theorem}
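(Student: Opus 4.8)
The plan is to use the Laplace--Beltrami machinery set up in Section~1 to construct $f_0$ explicitly via a Hodge-type decomposition. Given $f$ with $\Po f = F \in L^p$, observe that $\Q F = \Q \Po f = 0$ since $\Q \Po = 0$ in the complex. The key idea is to solve the Poisson equation $\triangle_\V \f = F$ for $\f \in L^{2,p}(\Bbb R^n,\V)$ using the Riesz-type transforms $R_{ij}$, and then split $F = -\Po\Po^*\f - \Q^*\Q\f$. I would first show that the term $\Q^*\Q\f$ vanishes: applying $\Q$ to $\triangle\f = F$ and using $\Q F = 0$ together with the fact that $\Q$ commutes with $\triangle$ (both have constant coefficients), one gets $\triangle(\Q\f) = 0$, and a Liouville-type argument on the symbol side (the multiplier $\xi \mapsto \Q(\xi)\triangle^{-1}(\xi)$ applied to $\widehat F$, combined with $\Q(\xi)\Po(\xi)=0$ forcing $\widehat{\Q F} \equiv 0$) shows $\Q\f \equiv 0$ as a genuine $L^{1,p}$ element, hence $\Q^*\Q\f = 0$. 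Therefore $F = -\Po(\Po^*\f)$, so if I set $g = -\Po^*\f \in L^{1,p}(\Bbb R^n,\U)$ then $\Po g = F = \Po f$.

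Next I would define $f_0 = f - g$. By construction $\Po f_0 = \Po f - \Po g = 0$, so $f_0 \in \ker\Po$, and $f - f_0 = g = -\Po^*\f$. It remains to estimate $\norm{g}_{1,p} = \norm{\Po^*\f}_{1,p}$. Since $\Po^* = -\sum_i A_i^* \partial_{x_i}$, every first-order derivative of $\Po^*\f$ is a linear combination of second-order derivatives of $\f$, i.e. of the form $A_i^* R_{jk} F$. Because each $R_{jk}$ is a Calder\'on--Zygmund operator bounded on $L^p$ for $1<p<\infty$ (as established in Section~1 from the bound $\norm{\xi_i\xi_j\triangle^{-1}(\xi)}\le c$), we obtain $\norm{g}_{1,p} \le C \norm{F}_p = C\norm{\Po f}_p$, which is the desired inequality.

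The main obstacle is the step showing $\Q\f = 0$ rigorously. On the Fourier side it is clear that $\widehat{\Q\f}(\xi) = \Q(\xi)\triangle^{-1}(\xi)\widehat F(\xi)$ and $\widehat F(\xi) = \Po(\xi)\widehat{(\text{something})}$ only formally; the honest statement is that $\Q F = 0$ as a distribution, so one must argue that the $L^{1,p}$ function $\Q\f$ — which \emph{a priori} only has $\triangle(\Q\f)=0$ — is actually zero. This requires a uniqueness/Liouville argument: a tempered distribution whose Laplace--Beltrami transform vanishes and whose first derivatives lie in $L^p$ must be constant (by ellipticity of $\triangle(\xi)$ the Fourier transform is supported at the origin, so the distribution is a polynomial, and the $L^{1,p}$ condition forces it to be a constant, which $\Q$ annihilates). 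One must also handle the approximation issue: $f$ itself need not be tempered, but $\Po f = F \in L^p$ is, so all the analysis should be carried out on $F$ and on $\f$, with the density of $C_0^\infty$ in $L^{1,p}$ (noted in Section~1) used to justify applying $\Po^*$, $\Q$, and the composition identities $\Q\Po^* $, $\Po^*\Po$ to these extended operators. A secondary technical point is verifying that $-\Po\Po^*\f - \Q^*\Q\f = \triangle\f = F$ genuinely holds for $\f \in L^{2,p}$ and not merely for smooth compactly supported data; this again follows by continuity of all operators involved in the relevant seminorms.
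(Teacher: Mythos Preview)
Your argument is essentially correct but takes a different route from the paper. The paper works on the $\U$ side: it solves $\triangle_\U\f = f$ where $\triangle_\U = \R\R^* + \Po^*\Po$, and sets $f_0 = f - \Po^*\Po\f$. The point is that $\Po\f$ then automatically satisfies $\Q(\Po\f)=0$ (from $\Q\Po=0$), and the intertwining identity $\triangle_\V(\Po\f)=\Po\triangle_\U\f=\Po f$ lets one estimate the second derivatives of $\Po\f$ by the Riesz transforms $R_{ij}$ applied to $\Po f$. Your approach instead solves $\triangle_\V\f=\Po f$ directly on the $\V$ side and must then \emph{verify} that $\Q^*\Q\f=0$; this is exactly the extra Liouville step you identify. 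What you gain is that your argument never uses $\R$ or $\triangle_\U$, so it really only needs the short complex $\U\to\V\to\W$ to be elliptic; what you lose is the cleanness of having $\Q\f=0$ for free.

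Two small corrections to your write-up. First, the phrase ``$\Q$ commutes with $\triangle$'' is not well-posed: $\Q\f$ is $\W$-valued, so $\triangle_\V(\Q\f)$ makes no sense, and there is no $\triangle_\W$ in the given data. The honest computation is $\Q\triangle_\V = -\Q\Q^*\Q$ (using $\Q\Po=0$), so $\Q F=0$ only yields $\Q\Q^*(\Q\f)=0$, and $\Q\Q^*$ need not be elliptic. The Fourier-side argument you sketch afterwards is the correct one: since $\triangle_\V(\xi)=\Po(\xi)\Po^*(\xi)+\Q^*(\xi)\Q(\xi)$ preserves the orthogonal splitting $\V=\im\Po(\xi)\oplus\im\Q^*(\xi)$, so does $\triangle_\V^{-1}(\xi)$; hence $\widehat F(\xi)\in\ker\Q(\xi)$ forces $\Q(\xi)\triangle_\V^{-1}(\xi)\widehat F(\xi)=0$. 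Second, this only gives that $\Q\f$ is a polynomial; the $L^{1,p}$ condition then forces $\Q\f$ to be \emph{constant}, not necessarily zero. But that is enough, since $\Q^*$ is first order and kills constants, so $\Q^*\Q\f=0$ and the rest of your argument goes through unchanged.
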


\begin{proof}
Here we shall need not only the Laplace-Beltrami Operator for functions valued in $\V$, but also the Laplace-Beltrami Operator for functions valued in $\U$, $\triangle_\U = \R\R^* + \Po^*\Po$.  There exists $\f \in \D'(\Bbb R^n, \U)$ such that $\triangle_\U \f =f$.  Note that because of the exactness of the elliptic complex, we have the identity $$\triangle_\V \Po\f = \Po\Po^*\Po\f + \Q^*\Q\Po\f = \Po\Po^*\Po\f + \Po\R\R^*\f = \Po\triangle_\U\f = \Po f.$$  Let $f_0 = f - \Po^*\Po\f$.  Now it simply remains to verify that $f_0$ satisfies the conclusions of the theorem.  First, $$\Po f_0 = \Po f - \Po\Po^*\Po\f = \Po f - \Po\Po^*\Po\f - \Po\Q\Q^*\f = \Po f - \Po \triangle_\U \f =0.$$  Also, \begin{eqnarray*}\sum_i \norm{\frac {\partial}{\partial x_i} (f-f_0)}_p & = & \sum_i \norm{\frac {\partial}{\partial x_i} \Po^*\Po\f}_p \leq \sum_{i,j} \norm{A_j^* \frac {\partial^2}{\partial x_i \partial x_j} \Po\f}_p \\ & \leq & \sum_{i,j} \norm{ A_j^* R_{ij} \Po f}_p \leq \sum_{i,j} \norm{A_j^*} C_{i,j} \norm{\Po f}_p \\ & \leq & C\norm{\Po f}_p.\end{eqnarray*}
\end{proof}

\section{Generalized Inverses}
Before we are able to present the second theorem, we need to look at the theory of generalized inverses.
\begin{prop}For $A\in \Hom(\Bbb U, \Bbb V)$, there exists a unique $A^\dagger \in \Hom(\Bbb V, \Bbb U)$, called the \textit{Moore-Penrose generalized inverse},with the following properties:
\begin{enumerate}
\item $AA^\dagger A=A: \U \to \V$,
\item $A^\dagger AA^\dagger = A^\dagger: \V \to \U$,
\item $(AA^\dagger)^*=AA^\dagger: \V \to \V$,
\item $(A^\dagger A)^* = A^\dagger A: \U \to \U$.
\end{enumerate}\end{prop}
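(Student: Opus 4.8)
The plan is to construct $A^\dagger$ explicitly using the orthogonal decompositions induced by the inner products, then verify the four Penrose conditions and uniqueness. First I would decompose $\U = \ker A \oplus (\ker A)^\perp$ and $\V = \im A \oplus (\im A)^\perp$. The restriction $A|_{(\ker A)^\perp} : (\ker A)^\perp \to \im A$ is a linear isomorphism; call its inverse $B : \im A \to (\ker A)^\perp \subset \U$. Define $A^\dagger : \V \to \U$ to be $B$ on $\im A$ and $0$ on $(\ker A)^\perp$, extended linearly. In other words, $A^\dagger$ is $B$ composed with the orthogonal projection $\V \to \im A$. With this definition, $AA^\dagger$ is precisely the orthogonal projection of $\V$ onto $\im A$, and $A^\dagger A$ is the orthogonal projection of $\U$ onto $(\ker A)^\perp$; both are self-adjoint idempotents, which immediately gives properties (3) and (4).

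For property (1), I would check on each summand of $\U$: on $\ker A$ both sides are $0$, and on $(\ker A)^\perp$ we have $A^\dagger A$ acting as the identity, so $AA^\dagger A = A$. Property (2) is symmetric: on $\im A$, $AA^\dagger$ is the identity so $A^\dagger A A^\dagger = A^\dagger$, and on $(\im A)^\perp$ both sides vanish. These are all routine once the projections are identified.

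For uniqueness, suppose $X$ and $Y$ both satisfy (1)--(4). The standard argument chains the identities: using (1) and (3) one shows $AX = (AX)^* = (AYAX)^* = (AX)^*(AY)^* = AXAY = AY$, and dually using (2) and (4) one shows $XA = YA$; then $X = XAX = X(AY) = (XA)Y = (YA)Y = YAY = Y$. I would write this chain out carefully since it is the one place sign- and order-of-composition errors creep in.

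The main obstacle is not conceptual but bookkeeping: one must consistently use the identifications $\U^* \cong \U$, $\V^* \cong \V$ via the inner products (as the paper already does for the formal adjoints) so that "self-adjoint" in (3) and (4) means equal to the genuine adjoint with respect to $\left<\,,\,\right>_\V$ and $\left<\,,\,\right>_\U$. The key insight making everything transparent is that the Penrose conditions are equivalent to the geometric statement "$AA^\dagger = $ orthogonal projection onto $\im A$ and $A^\dagger A = $ orthogonal projection onto $(\ker A)^\perp$," so essentially all the work is repackaging the rank--nullity splitting of a linear map.
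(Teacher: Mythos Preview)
Your argument is correct and is the standard construction; the paper itself does not prove this proposition but simply cites Campbell--Meyer for it, so there is nothing to compare against beyond noting that your approach is exactly the classical one found in that reference.

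Two small slips to fix before you write it up. First, in the sentence ``Define $A^\dagger : \V \to \U$ to be $B$ on $\im A$ and $0$ on $(\ker A)^\perp$'' you mean $(\im A)^\perp$, not $(\ker A)^\perp$; the next sentence makes clear you know this, but correct the typo. Second, in the uniqueness paragraph you say the dual chain for $XA = YA$ uses (2) and (4); in fact it uses (1) and (4), mirroring the first chain (one still needs $A = AYA$ and $A = AXA$ to insert and then collapse the extra factor). Since you already flagged this step as the place where order-of-composition errors creep in, just be sure to track which of the four identities you actually invoke when you write the chain out.
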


The linear map $A^\dagger$ has properties similar to inverse matrices that make it valuable as a tool.
\begin{prop}For $\la\neq 0$, $(\la A)^\dagger = \la^{-1} A^\dagger$.\end{prop}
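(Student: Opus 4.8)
The plan is to verify that $\la^{-1}A^\dagger$ satisfies the four defining Moore--Penrose properties with $\la A$ in place of $A$, and then invoke the uniqueness asserted in the previous proposition. This is the natural strategy whenever one wants to identify a Moore--Penrose inverse: rather than computing it from scratch (say via singular value decomposition), one simply exhibits a candidate and checks the four axioms, since they pin down the inverse uniquely.

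First I would write $B = \la^{-1} A^\dagger$ and check property (1): $(\la A) B (\la A) = (\la A)(\la^{-1} A^\dagger)(\la A) = \la (A A^\dagger A) = \la A$, using $A A^\dagger A = A$. Next, property (2): $B (\la A) B = (\la^{-1}A^\dagger)(\la A)(\la^{-1}A^\dagger) = \la^{-1}(A^\dagger A A^\dagger) = \la^{-1} A^\dagger = B$. For properties (3) and (4) I would observe that $(\la A) B = (\la A)(\la^{-1}A^\dagger) = A A^\dagger$ and $B(\la A) = (\la^{-1}A^\dagger)(\la A) = A^\dagger A$; since $A A^\dagger$ and $A^\dagger A$ are already self-adjoint by properties (3) and (4) for $A$, the corresponding identities for $\la A$ hold immediately. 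Here I am implicitly using that $\la$ is real (so that $\la = \bar\la$) — in the setting of the paper all the vector spaces are real inner product spaces, so there is no conjugation to worry about; if one wanted the complex case one would need $|\la|^2$-type bookkeeping, but that is not our situation.

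Having checked all four properties, uniqueness of the Moore--Penrose inverse of $\la A$ forces $(\la A)^\dagger = B = \la^{-1} A^\dagger$, which is the claim. There is really no main obstacle here: the only thing to be slightly careful about is that $\la \neq 0$ is needed precisely so that $\la^{-1}$ makes sense, and this hypothesis is already in the statement. The proof is a short, direct verification.
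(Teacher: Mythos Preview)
Your proposal is correct: verifying the four Moore--Penrose axioms for the candidate $\la^{-1}A^\dagger$ and invoking uniqueness is exactly the standard argument, and your computations are fine (your remark that the real-scalar setting avoids any conjugation issues is also appropriate here). Note, however, that the paper itself does not supply a proof of this proposition at all --- it simply states it and refers the reader to \cite{Campbell_Meyer79} for proofs of the generalized-inverse facts --- so there is no in-paper argument to compare against; your direct verification is the natural way to fill that gap.
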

\begin{prop}For a continuous matrix valued function $P=P(\xi)$, the function $P^\dagger = P^\dagger(\xi)$ is continuous at $\xi$ if and only if there is a neighborhood of $\xi$ on which $P$ has constant rank.\end{prop}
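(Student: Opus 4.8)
The plan is to prove the two implications separately, with a single algebraic fact doing most of the work in both: for any $A$, the operator $A^\dagger A$ is an \emph{orthogonal projection} onto $(\ker A)^\perp$, so that $\rank A = \operatorname{tr}(A^\dagger A)$. Indeed, by properties (1) and (2) of the previous proposition $A^\dagger A$ is idempotent, and by (4) it is self-adjoint, hence an orthogonal projection, whose trace equals its rank; and $AA^\dagger A = A$ forces $\rank A \le \rank(A^\dagger A) \le \rank A$.

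For the ``only if'' direction, assume $P^\dagger$ is continuous at $\xi_0$. Since $P$ is continuous, the map $\xi \mapsto P^\dagger(\xi)P(\xi)$ is continuous at $\xi_0$, and therefore so is $\xi \mapsto \operatorname{tr}\!\big(P^\dagger(\xi)P(\xi)\big) = \rank P(\xi)$. But this function is integer valued, and an integer-valued function continuous at $\xi_0$ is constant on a neighborhood of $\xi_0$ (take $\varepsilon = \tfrac12$ in the definition of continuity). Hence $P$ has constant rank near $\xi_0$.

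For the ``if'' direction, suppose $\rank P(\xi) \equiv r$ on a neighborhood $N$ of $\xi_0$. I would construct a continuous full-rank factorization of $P$ near $\xi_0$. Choose rows $i_1 < \cdots < i_r$ and columns $j_1 < \cdots < j_r$ so that the associated $r\times r$ submatrix $P_0(\xi_0)$ is invertible; by continuity, $P_0(\xi)$ stays invertible on a smaller neighborhood $N' \subseteq N$. Let $B(\xi)$ be the matrix of columns $j_1,\dots,j_r$ of $P(\xi)$ and let $P^{(i)}(\xi)$ be the matrix of rows $i_1,\dots,i_r$. Using that $\rank P(\xi) = r$ on $N'$, one checks the identity $P(\xi) = B(\xi)\,P_0(\xi)^{-1}\,P^{(i)}(\xi)$ there. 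Setting $C(\xi) = P_0(\xi)^{-1}P^{(i)}(\xi)$, the factor $B$ has full column rank $r$, the factor $C$ has full row rank $r$, and both are continuous on $N'$. Under exactly these rank conditions the reverse-order law is valid, so $P^\dagger = (BC)^\dagger = C^\dagger B^\dagger = C^*(CC^*)^{-1}(B^*B)^{-1}B^*$; since $CC^*$ and $B^*B$ are invertible $r\times r$ matrices depending continuously on $\xi$, and matrix inversion is continuous on the invertible matrices, $P^\dagger$ is continuous on $N'$, in particular at $\xi_0$.

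The main obstacle is this last direction. Two points need care: first, verifying that the chosen minor genuinely produces a factorization on a \emph{full} neighborhood --- this is precisely where constant rank is used, since without it the expression $B P_0^{-1} P^{(i)}$ need not equal $P$ --- and second, justifying the reverse-order law $(BC)^\dagger = C^\dagger B^\dagger$, which fails for general products and holds here only because $B$ has full column rank and $C$ full row rank (so that $B^\dagger = (B^*B)^{-1}B^*$ and $C^\dagger = C^*(CC^*)^{-1}$ and the four Moore--Penrose identities for $C^\dagger B^\dagger$ can be checked directly). An alternative route avoiding the explicit factorization is to write $P^\dagger = (P^*P)^\dagger P^*$ and compute $(P^*P)^\dagger$ by the holomorphic functional calculus as $\frac{1}{2\pi i}\oint_\Gamma z^{-1}(zI - P^*P)^{-1}\,dz$ over a fixed contour $\Gamma$ enclosing the positive eigenvalues of $P^*P$, which by the constant rank hypothesis stay in a compact subset of $(0,\infty)$ near $\xi_0$; continuity of the resolvent jointly in $(\xi,z)$ then gives continuity of $P^\dagger$.
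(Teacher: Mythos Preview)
The paper does not actually prove this proposition; it merely states it and refers the reader to \cite{Campbell_Meyer79} for the proofs of all the generalized-inverse facts. So there is no in-paper argument to compare against.

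Your proof is correct. The ``only if'' direction via $\rank P(\xi)=\operatorname{tr}\bigl(P^\dagger(\xi)P(\xi)\bigr)$ is clean and uses exactly the projection property recorded in the paper; note that continuity of $P^\dagger$ only \emph{at} $\xi_0$ together with continuity of $P$ is enough for continuity of the product at $\xi_0$, which is all you need. For the ``if'' direction, your full-rank factorization $P=B\,P_0^{-1}\,P^{(i)}$ is the standard one, and the two points you flag as needing care are indeed the crux: the identity $P=BP_0^{-1}P^{(i)}$ holds on $N'$ precisely because $\rank P\equiv r$ there (so the Schur-complement block relation $P_{22}=P_{21}P_0^{-1}P_{12}$ is forced), and the reverse-order law $(BC)^\dagger=C^\dagger B^\dagger$ is valid under your full-column-rank/full-row-rank hypotheses and can be checked directly from the four Moore--Penrose axioms. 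The functional-calculus alternative you sketch also works and is arguably more robust, since it avoids choosing a particular minor.
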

\begin{prop}$AA^\dagger$ is the orthogonal projection onto the image of $A$.  $A^\dagger A$ is the orthogonal projection onto the orthogonal complement of the kernel of $A$.\end{prop}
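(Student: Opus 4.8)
The plan is to derive everything from the four defining identities of $A^\dagger$, together with the elementary fact that a linear operator $P$ on a finite dimensional inner product space is the orthogonal projection onto a subspace $S$ exactly when $P$ is idempotent and self-adjoint with $\im P = S$ (equivalently, idempotent and self-adjoint with $\ker P = S^\perp$). That elementary fact is itself quick: an idempotent $P$ gives $\U = \im P \oplus \ker P$, and $P = P^*$ forces $\im P \perp \ker P$, since for $v = Pz$ and $w \in \ker P$ one has $\left< v, w \right> = \left< Pz, w \right> = \left< z, Pw \right> = 0$.

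First I would handle $AA^\dagger : \V \to \V$. Self-adjointness is immediate from property (3). Idempotence follows by inserting property (1): $(AA^\dagger)(AA^\dagger) = (AA^\dagger A)A^\dagger = AA^\dagger$. Hence $AA^\dagger$ is an orthogonal projection onto its own image, and it only remains to identify that image with $\im A$. The inclusion $\im(AA^\dagger) \subseteq \im A$ is trivial; for the reverse, given $v = Au$ I would use property (1) once more to get $AA^\dagger v = AA^\dagger A u = Au = v$, so $v$ is fixed by $AA^\dagger$ and therefore lies in $\im(AA^\dagger)$.

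For $A^\dagger A : \U \to \U$ the argument is symmetric: property (4) gives self-adjointness, and property (1) gives idempotence via $(A^\dagger A)(A^\dagger A) = A^\dagger(AA^\dagger A) = A^\dagger A$. Thus $A^\dagger A$ is an orthogonal projection, and the one remaining point is to identify its image. Rather than chase the image directly, I would compute the kernel: plainly $\ker A \subseteq \ker(A^\dagger A)$, and conversely if $A^\dagger A u = 0$ then $A u = A A^\dagger A u = 0$ by property (1), so $\ker(A^\dagger A) = \ker A$. Since the image of an orthogonal projection is the orthogonal complement of its kernel, $\im(A^\dagger A) = (\ker A)^\perp$.

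I do not anticipate a real obstacle here: the content is bookkeeping with the Penrose identities, and the only mild choice is to verify $\ker(A^\dagger A) = \ker A$ rather than computing $\im(A^\dagger A)$ head on. One could instead show $\im(A^\dagger A) = \im(A^\dagger)$ by the same fixed-point argument together with property (2), and then check $\im(A^\dagger) = (\ker A)^\perp$ separately, but routing through the kernel is cleaner.
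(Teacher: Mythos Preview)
Your argument is correct: self-adjointness from (3) and (4), idempotence from (1), and the identification of the image/kernel via the fixed-point computations $AA^\dagger(Au)=Au$ and $A(A^\dagger A u)=Au$ are exactly the standard route from the Penrose axioms.

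There is nothing to compare against, however: the paper does not prove this proposition. It merely states it and refers the reader to \cite{Campbell_Meyer79} for proofs of the generalized-inverse facts. Your write-up would serve perfectly well as the omitted argument.
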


For a more detailed discussion of generalized inverses and the proofs of these results, consult \cite{Campbell_Meyer79} and the references cited there.
Generalized Inverses are the additional tools we need for the following theorem.
\begin{theorem}\label{gen_inv}Let $\Po:\D'(\Bbb R^n, \U) \to \D'(\Bbb R^n, \V)$ be a differential operator of order 1 with constant coefficients and symbol $\Po(\xi)$ which is of constant rank for $\xi\neq 0$, and let $f \in \D'(\Bbb R^n, \U)$ such that $\Po f\in L^p(\Bbb R^n, \V)$, $1<p<\infty$.  Then there exists $f_0 \in \D'(\Bbb R^n, \U)$ such that $\Po f_0=0$ and $$\norm{f-f_0}_{1,p} \leq C \norm{\Po f}_p.$$
\end{theorem}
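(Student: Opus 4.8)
The plan is to re-run the argument of \autoref{Elliptic_complex} with the ``missing'' operator $\R$ replaced by the Moore--Penrose inverse of the symbol (morally, $\triangle_\U^{-1}$ is replaced by $(\Po^*\Po)^\dagger$). Heuristically, on the Fourier side one wants $f_0$ to be the distribution with $\widehat{f_0}(\xi)=\bigl(I-\Po(\xi)^\dagger\Po(\xi)\bigr)\widehat f(\xi)$, i.e.\ the orthogonal projection of $\widehat f(\xi)$ onto $\ker\Po(\xi)$: property~(1) of the generalized inverse, $\Po(\xi)\Po(\xi)^\dagger\Po(\xi)=\Po(\xi)$, then forces $\Po(\xi)\widehat{f_0}(\xi)=0$, while the correction $\widehat{f-f_0}(\xi)=\Po(\xi)^\dagger\Po(\xi)\widehat f(\xi)$ is — up to the usual factor of $i$ — just $\Po(\xi)^\dagger\,\widehat{\Po f}(\xi)$, expressed through $\Po f$ alone. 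Since $(\la\Po(\xi))^\dagger=\la^{-1}\Po(\xi)^\dagger$, the multiplier $\Po(\xi)^\dagger$ is homogeneous of degree $-1$, so each first derivative $\partial_j(f-f_0)$ is the degree-zero multiplier $\xi_j\Po(\xi)^\dagger$ applied to $\Po f$ — precisely the setting of a Calder\'on--Zygmund operator.

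To make this rigorous, set $g:=\Po f\in L^p(\Bbb R^n,\V)$ and, for each $j$, let $S_j$ be the Fourier multiplier operator with matrix-valued symbol $\xi_j\,\Po(\xi)^\dagger\colon\V\to\U$. Granting the regularity statement discussed below, each $\xi_j\Po(\xi)^\dagger$ is smooth away from the origin and $0$-homogeneous, hence obeys $\norm{\partial^\al\bigl(\xi_j\Po(\xi)^\dagger\bigr)}\le C_\al\abs{\xi}^{-\abs{\al}}$, so Mikhlin--H\"ormander makes $S_j$ bounded on $L^p$ for $1<p<\infty$; the same applies to $M:=\sum_iA_iS_i$, whose symbol is the orthogonal projection $\Po(\xi)\Po(\xi)^\dagger$ onto $\im\Po(\xi)$. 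The $S_jg$ satisfy the compatibility relations $\partial_k(S_jg)=\partial_j(S_kg)$, so there is $h\in L^{1,p}(\Bbb R^n,\U)$, unique up to a constant, with $\partial_jh=S_jg$ for all $j$; put $f_0:=f-h$, so that $\norm{f-f_0}_{1,p}=\sum_j\norm{S_j\Po f}_p\le C\norm{\Po f}_p$. For $\Po f_0=0$ note $\Po h=\sum_iA_iS_ig=Mg$, whence $\Po f_0=(I-M)g$; and $\bigl(I-\Po(\xi)\Po(\xi)^\dagger\bigr)\Po(\xi)=0$ by property~(1), so $(I-M)\Po$ is the zero operator on $\mathcal S'$, giving $(I-M)g=(I-M)\Po f=0$ when $f$ is tempered. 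For a general $f\in\D'$ the same follows after producing a polynomial left-annihilator of $\Po$: the minor formula for the pseudoinverse shows $p_r(\xi)\,\Po(\xi)^\dagger$ is a polynomial, where $p_r$ is the degree-$2r$ sum of principal $r\times r$ minors of $\Po(\xi)^*\Po(\xi)$ ($r=\rank\Po(\xi)$), nonvanishing for $\xi\neq0$; then $p_r(D)(I-M)\Po=0$, so $p_r(D)(I-M)g=0$ in $\D'$, and ellipticity of the scalar operator $p_r(D)$ together with $(I-M)g\in L^p$ forces $(I-M)g=0$.

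The crux — and the step I expect to be the main obstacle — is upgrading the \emph{continuity} of $\xi\mapsto\Po(\xi)^\dagger$ supplied by the constant-rank condition to genuine smoothness on $\Bbb R^n\setminus\{0\}$ with the scale-invariant derivative bounds Mikhlin--H\"ormander requires. I would argue via a uniform spectral gap. The self-adjoint positive semidefinite matrix $T(\xi):=\Po(\xi)^*\Po(\xi)$ depends polynomially on $\xi$, and by constant rank it has $\dim\U-r$ zero eigenvalues and $r$ positive ones for every $\xi\neq0$; by compactness of the unit sphere the positive eigenvalues lie in a fixed interval $[a,b]\subset(0,\infty)$, so a single contour $\Gamma\subset\Bbb C$ encircles $[a,b]$ while excluding both $0$ and $\operatorname{spec}T(\xi)$. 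Then $T(\xi)^\dagger=\frac{1}{2\pi i}\oint_\Gamma z^{-1}\bigl(zI-T(\xi)\bigr)^{-1}\,dz$ is real-analytic in $\xi$ on $\abs{\xi}=1$, and since $\Po(\xi)^\dagger=T(\xi)^\dagger\Po(\xi)^*$ (a direct check of the four defining properties), homogeneity of degree $-1$ extends it to a $C^\infty$ matrix function on $\Bbb R^n\setminus\{0\}$. With that regularity in hand, the remainder is standard Calder\'on--Zygmund theory together with routine manipulation of the Moore--Penrose identities.
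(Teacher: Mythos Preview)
Your approach is exactly the paper's: build first-order Riesz-type transforms from the Fourier multipliers $\xi_j\Po(\xi)^\dagger$, use the compatibility $\partial_kS_j=\partial_jS_k$ to produce $h$ with $\partial_jh=S_j(\Po f)$, set $f_0=f-h$, and read off $\Po f_0=(I-M)\Po f=0$ together with the $L^{1,p}$ estimate from $L^p$-boundedness of the $S_j$. Where you go beyond the paper is in rigor rather than strategy: the paper invokes Calder\'on--Zygmund theory after checking only that $\xi_j\Po^\dagger(i\xi)$ is \emph{continuous} and $0$-homogeneous, whereas your contour-integral argument for real-analyticity of $T(\xi)^\dagger$ on the sphere actually supplies the Mikhlin--H\"ormander derivative bounds; and the paper asserts without comment that $\sum_jA_jR_j(\Po f)=\Po f$ extends from test functions to arbitrary $f\in\D'$, while your Decell-type identity $p_r(\xi)\Po(\xi)^\dagger=q\bigl(\Po(\xi)^*\Po(\xi)\bigr)\Po(\xi)^*$ makes $p_r(D)(I-M)$ a genuine constant-coefficient differential operator annihilating $\Po$, so that $(I-M)\Po f\in L^p\cap\ker p_r(D)=\{0\}$ by ellipticity of $p_r$. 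Both additions are correct and genuinely tighten the argument; the underlying route is unchanged.
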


Note that this is the same constant rank condition investigated in \cite{FonsecaMuller99} in relation to quasiconvexity of variational integrals.

%\begin{proof}
%The symbol $\Po(\xi): \U \to \V$ has a generalized inverse $\Po^\dagger(\xi): \V \to \U$.  We associate with $\Po^\dagger (\xi)$ a pseudodifferential operator $\Po^\dagger$ defined on functions $h \in C^\infty_0(\Bbb R^n, \V)$ by the rule $\Po^\dagger h (x) = (2\pi)^{-n/2}\int e^{i x\cdot \xi} \Po^\dagger(i \xi) \widehat{h}(\xi) d\xi$.  Note that since $\Po(\xi) \Po^\dagger(\xi) \Po(\xi)=\Po(\xi)$, we get that $\Po\Po^\dagger\Po=\Po$.  Thus, applying $\Po^\dagger$ to $\Po f$ gives us $f-f_0$.  Note that since $\Po(\xi)$ is homogeneous of degree 1, we get that for $\la\neq 0$ $$(\la\xi_i)\Po^\dagger(\la\xi) = \la\xi_i \left( \la \Po(\xi)\right)^\dagger = \xi_i\Po(\xi).$$  So, $\xi_i\Po^\dagger(\xi)$ is homogeneous of degree 0.  Since $\Po(\xi)$ has constant rank away from the origin, $\xi_i\Po^\dagger(\xi)$ is continuous on $\abs{\xi}=1$, which makes it bounded.  Therefore, the operator given by $$f\mapsto (2\pi)^{-n/2}\int e^{i x\cdot \xi} \xi_i\Po^\dagger(\xi) \widehat{f}(\xi) d\xi$$ is bounded on $L^p$ for $1<p<\infty$.  Thus $$\norm{f-f_0}_{1,p} \leq \sum_i \norm{\int \xi_i\Po^\dagger(\xi) \widehat{\cdot}(\xi) d\xi} \norm{\Po f}_p.$$
%\end{proof}

\begin{proof}
From the symbol $\Po(\xi): \U \to \V$, we have its generalized inverse $\Po^\dagger (\xi): \V \to \U$.  We use this to define a pseudodifferential operator $R_j$, which we will refer to as the first order Riesz type transforms.  For $h \in C_0^\infty(\Bbb R^n, \V)$, we define $R_j h (x) = (2\pi)^{-n/2} \int i e^{ix\cdot \xi} \xi_j\Po^\dagger (i\xi) \widehat{h}(\xi) d\xi$. Note that since $\Po(\xi)$ is homogeneous of degree 1, we get that for $\la\neq 0$ $$(\la\xi_j)\Po^\dagger(i\la\xi) = \la\xi_j \left( \la \Po(i\xi)\right)^\dagger = \xi_j\Po(i\xi).$$  So, $\xi_j\Po^\dagger(i\xi)$ is homogeneous of degree 0.  Since $\Po(\xi)$ has constant rank away from the origin, $\Po^\dagger(i\xi)$ is continuous on $\abs{\xi}=1$.  Thus, $\xi_j\Po^\dagger(i\xi)$ is continuous and homogeneous of order 0, which makes it bounded.  Therefore, $R_j$ extends continuously to an operator from $L^p(\Bbb R^n, \V)$ to $L^p(\Bbb R^n, \U)$.  Recalling the definition of the operator $\Po = \sum_j A_j \frac {\partial} {\partial x_j}$, we note that $$\sum_j A_j R_j h = (2\pi)^{-n/2} \int e^{ix\cdot \xi} \Po(i \xi) \Po^\dagger(i\xi) \widehat{h} (\xi) d\xi.$$  We also note that $\Po(i\xi)\Po^\dagger(i\xi)$ is the orthogonal projection onto the image of $\Po(i\xi)$.  In particular this means that if $h=\Po g$, then $\sum_j A_j R_j h = h$, and since this is defined by a Calder\'{o}n-Zygmund singular integral operator this extends to all of $L^p(\Bbb R^n, \V)$.

The reader may wish to notice that $$\frac {\partial} {\partial x_j} R_k h = \frac {\partial} {\partial x_k} R_j h.$$  Therefore, there exists a distribution $f_0$ such that $\frac {\partial}{\partial x_j} f_0 = \frac {\partial}{\partial x_j} f - R_j \Po f$, for $j=1, \dots, n$.  Then, we have $$\Po f_0 = \Po f - \sum_j A_j R_j \Po f = \Po f - \Po f =0.$$  And, $$\norm{f-f_0}_{1,p} \leq \sum_j \norm{R_j \Po f}_p \leq C \norm{\Po f}_p,$$ where the constant depends on the norms of the Riesz type transforms.
\end{proof}

\section{Sufficiency of Generalized Inverses}
At this point we have proved \autoref{Elliptic_complex} and \autoref{gen_inv} in an attempt to answer our question about when a generalized Poincar\'{e} inequality is true.  What is unclear is if these two results are related in any way.  Before we can answer this question, we will need the following lemma.  It should be noted that although it is elementary in nature, we were unable to find it in the literature.

\begin{lemma} \label{lsc}Let $\U$ and $\V$ be finite dimensional innerproduct spaces.  Let $r:\Hom(\U, \V) \to \Bbb N_0$  be the function that takes a matrix to its rank.  If $\Hom(\U, \V)$ is given the operator norm, then $r$ is lower semicontinuous.
\end{lemma}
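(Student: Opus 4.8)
The plan is to show that rank cannot drop suddenly by exhibiting, for a matrix $A$ of rank $k$, a quantitative ``gap'' below which every nearby matrix still has rank at least $k$. Concretely, I would fix $A \in \Hom(\U,\V)$ with $\rank A = k$ and then produce $\epsilon > 0$ such that $\norm{B - A} < \epsilon$ forces $\rank B \geq k$; since $k = r(A)$ was arbitrary and lower semicontinuity is exactly the statement that $r(B) \geq r(A) - \delta$ near $A$ for the integer-valued function $r$ (equivalently $\{r \geq k\}$ is open for each $k$), this suffices.

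The key step is to choose a witness to the rank of $A$. Pick vectors $u_1, \dots, u_k \in \U$ such that $Au_1, \dots, Au_k$ are linearly independent in $\V$; we may normalize $\norm{u_i} = 1$. Let $m > 0$ be the smallest singular value of the linear map $\U \supset \mathrm{span}(u_1,\dots,u_k) \to \V$ induced by $A$ — equivalently, $m = \inf\{ \norm{Av} : v \in \mathrm{span}(u_1,\dots,u_k),\ \norm{v} = 1\}$, which is strictly positive precisely because $Au_1,\dots,Au_k$ are independent and this is an infimum of a continuous positive function over a compact set (the unit sphere of a finite-dimensional space). Now if $\norm{B - A} < m$, then for every unit vector $v$ in that $k$-dimensional subspace we have $\norm{Bv} \geq \norm{Av} - \norm{(B-A)v} \geq m - \norm{B-A} > 0$, so $B$ is injective on $\mathrm{span}(u_1,\dots,u_k)$, hence $\rank B \geq k$. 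Taking $\epsilon = m$ completes the argument.

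The main obstacle — really the only thing requiring care — is extracting the uniform lower bound $m > 0$ rather than just a pointwise nonvanishing statement: one needs that $B - A$ being small in operator norm controls $\norm{(B-A)v}$ uniformly over the unit sphere of the chosen subspace, which is immediate from the definition of the operator norm, and that $\inf_{\norm v = 1} \norm{Av}$ over the subspace is attained and positive, which is compactness plus injectivity of $A$ there. Everything else is the standard reformulation that an integer-valued function is lower semicontinuous iff each superlevel set $\{r \geq k\}$ is open. I would also remark that this gives a clean proof of the ``only if'' direction used implicitly around the Moore–Penrose inverse: combined with the trivial bound $\rank B \leq \rank A$ failing in general, constant rank on a neighborhood is exactly the continuity locus of $P^\dagger$.
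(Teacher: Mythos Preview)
Your argument is correct. You establish lower semicontinuity by showing each superlevel set $\{r \geq k\}$ is open: fixing a $k$-dimensional subspace on which $A$ is injective, you use compactness of its unit sphere to get the positive lower bound $m$, and then a triangle-inequality perturbation shows any $B$ with $\norm{B-A} < m$ remains injective there. This is a clean, quantitative approach that even yields an explicit neighborhood radius.

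The paper proceeds dually, showing each sublevel set $\{r \leq t-1\}$ is closed. It fixes an orthonormal basis of $\U$ and, for every $t$-element subset $\omega$ of basis vectors, considers the continuous map $\Phi_\omega(A,\mathbf m) = \sum_\nu m_\nu A u_{i_\nu}$ on $\Hom(\U,\V)\times \Bbb S^{t-1}$; the zero set is closed, and its projection to $\Hom(\U,\V)$ is closed because $\Bbb S^{t-1}$ is compact. Intersecting over all $\omega$ gives exactly the matrices whose columns admit a dependence among every $t$-tuple, i.e.\ those of rank at most $t-1$. So both proofs ultimately rest on compactness of a finite-dimensional sphere, but the paper packages it as ``projection of a closed set along a compact factor is closed,'' while you package it as ``a positive continuous function on a compact set has a positive minimum.'' Your route is more direct and avoids the combinatorial bookkeeping over subsets of a basis; the paper's route is slightly more structural and makes the closedness of each rank stratum visible as an intersection of explicitly described closed sets. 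Your closing remark about the Moore--Penrose inverse is tangential to the lemma itself and a bit loosely phrased, but it does not affect the proof.
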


\begin{proof}
Since $r$ only takes values in the nonnegative integers, it is enough to show that for each natural number $t$, the set $\{r(A) \leq t-1\}$ is closed.  Let $u_1, \dots, u_l$ be an orthonormal basis for $\U$.  Let $\OM$ be the collection of ordered $t$-tuples $\om= (i_1, \dots, i_t)$ where $1 \leq i_1 < \dots < i_t \leq l$.  For each $\om \in \OM$, we define a function $\Phi_\om: \Hom(\U, \V) \times \Bbb{S}^{t-1} \to \V$ by $$\Phi_\om ( A, \mathbf{m}) = \sum_{\nu=1}^t m_\nu A u_{i_\nu}, \hspace{.5in} \left( \sum_{\nu=1}^t m_\nu^2 =1 \right)$$ where $\mathbf{m}=(m_1, \dots, m_t) \in \Bbb{S}^{t-1}$.  Consider $\Phi_\om^{-1} (0)$.  This is a closed set in $\Hom(\U, \V) \times \Bbb{S}^{t-1}$.  Denote the projection of this set onto $\Hom(\U, \V)$ by $Z_\om$.  Note that $Z_\om$ is closed since $\Bbb{S}^{t-1}$ is compact.  Now, let $$Z = \bigcap_{\om\in\OM} Z_\om.$$  Note that $Z$ is a closed set.  Also, $Z$ is precisely the collection of matrices of rank less than $t$.
\end{proof}

%\begin{proof}
%Since $r$ only takes values in the nonnegative integers, it is enough to show that for each natural number $s$, the set $\{r(A) \leq s-1\}$ is closed.  Let $u_1, \dots, u_l$ be an orthonormal basis for $\U$ and $v_1, \dots v_m$ be an orthonormal basis for $\V$.  We will let $Z(f_1, \dots, f_k)$ denote the common zeros of the scalar functions $f_1, \dots, f_k$.  Note that each such set is closed if the $f_k$ are continuous.  Now, $r(A) \leq s-1$ if for every subset $\{u_i\}$ of cardinality $s$, there is $(m_1, \dots, m_s) \in \Bbb{RP}^{s-1}$ such that $\sum_\nu m_\nu A u_{i_\nu} =0$.  Let $$X = Z\left(  \left< \sum_\nu m_\nu A u_{i_\nu}, v_h\right>_\Bbb V \right) \subset L(\U, \V) \times \Bbb{RP}^{s-1}.$$  Here, the set of functions ranges over all subsets $\{u_{i_\nu}\}$ of cardinality $s$ of our basis for $\U$, and all $1 \leq h \leq m$, each of which is continuous.  So, we can get our set $\{r(A) \leq s-1\}$ by projecting $X$ onto its first component.

%Now, we simply need to show that $\pi X$, the projection of $X$ onto $L(\U, \V)$, is closed.  Let $A_n$ be a sequence of matrices in $\pi X$ which converge to $A$.  Then, there exist $m_n$ in $\Bbb {RP}^{s-1}$ such that $(A_n,m_n) \in X$.  Since $\Bbb{RP}^{s-1}$ is compact, there is a subsequence $m_{n_j}$ which converges to $m$. Thus, $(A_{n_j}, m_{n_j})$ converges to $(A,m)$.  So, $A$ is in $\pi X$, which means that $\{r(A) \leq s-1\}$ is closed.
%\end{proof}

Since this next result is true for a broader class of elliptic complexes than what we have previously defined, we will take a moment for definitions so that we may state our result in this broader sense.  For differential operators $$\Po = \sum_{\abs\al \leq m} A_{\al}(x) D^\al$$ and $$\Q = \sum_{\abs\al \leq m} B_{\al}(x) D^\al$$ of order $m$ with variable coefficients, then $$\begin{CD} \D'(\Bbb R^n, \U) @>\Po>> \D'(\Bbb R^n, \V) @>\Q>> \D'(\Bbb R^n, \W) \end{CD}$$ is an elliptic complex of order $m$ if $\Q\Po=0$ and the symbol complex $$\begin{CD} \U @>\Po_m(x,\xi)>> \V @>\Q_m(x,\xi)>> \W \end{CD}$$ is exact for every $x$ and every $\xi \neq 0$.  Here, $\Po_m$ denotes the principle symbol of $\Po$, that is $\sum_{\abs\al=m} A_\al(x) \xi^\al$, and similarly for $\Q_m$.

\begin{theorem}
A sequence $$\begin{CD} \D'(\Bbb R^n, \U) @>\Po>> \D'(\Bbb R^n, \V) @>\Q>> \D'(\Bbb R^n, \W)\end{CD}$$ with continuous coefficients is an elliptic complex if and only if all of the following hold:
\renewcommand{\theenumi}{\roman{enumi}}
\begin{enumerate}
\item $\Q\Po=0$.
\item The sequence $$\begin{CD} \U @>\Po_m(y,\z)>> \V @>\Q_m(y,\z)>> \W \end{CD}$$ is exact for some $y$ and some $\z\neq0$.
\item For each multi-index $\g$ of length $2m$, $$\sum_{\substack{\al+\bt=\g, \\ \abs\al = \abs\bt =m}} B_\bt(x) A_\al(x) =0$$ as operators from $\U$ to $\V$.
\item The matrix $\Po_m(x,\xi)$ has constant rank for all $x$ and all $\xi\neq0$.
\item The matrix $\Q_m(x,\xi)$ has constant rank for all $x$ and all $\xi\neq0$.
\end{enumerate}
\end{theorem}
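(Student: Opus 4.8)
The statement is an equivalence between ``being an elliptic complex'' (in the variable-coefficient sense just defined: $\Q\Po=0$ together with exactness of the symbol complex at \emph{every} $x$ and every $\xi\neq0$) and the list (i)--(v), so the plan is to prove the two implications separately. Throughout, one should keep in mind that for a three-term sequence $\U\to\V\to\W$ the only exactness at issue is exactness at the middle term, i.e. $\im\Po_m(x,\xi)=\Nl\Q_m(x,\xi)$; exactness at the ends would force $\Po_m$ injective and $\Q_m$ surjective, which is not required --- witness the exterior derivative, which is neither. Assume first that the sequence is an elliptic complex. Then (i) is literally part of the definition, and (ii) is the special case of the exactness that holds at every admissible $(x,\xi)$. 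For (iii), exactness at $\V$ gives $\im\Po_m(x,\xi)\subseteq\Nl\Q_m(x,\xi)$, i.e. $\Q_m(x,\xi)\Po_m(x,\xi)=0$, for all $x$ and all $\xi\neq0$; expanding this as $\Q_m(x,\xi)\Po_m(x,\xi)=\sum_{\abs\g=2m}\bigl(\sum_{\al+\bt=\g,\ \abs\al=\abs\bt=m}B_\bt(x)A_\al(x)\bigr)\xi^\g$, a polynomial identity in $\xi$ valid off the origin hence everywhere, forces each inner sum to vanish, which is (iii).

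The real content of the forward direction is (iv) and (v), and this is where I would invoke \autoref{lsc}. Exactness at $\V$ and rank--nullity give, for all $x$ and all $\xi\neq0$,
\[
\rank\Po_m(x,\xi)+\rank\Q_m(x,\xi)=\dim\V .
\]
Since $(x,\xi)\mapsto\Po_m(x,\xi)=\sum_{\abs\al=m}A_\al(x)\xi^\al$ is continuous (continuous coefficients, polynomial in $\xi$), composing with the rank function and applying \autoref{lsc} shows $(x,\xi)\mapsto\rank\Po_m(x,\xi)$ is lower semicontinuous on $\Bbb R^n\times(\Bbb R^n\setminus\{0\})$, and likewise for $\Q_m$. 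Two lower semicontinuous functions whose sum is the constant $\dim\V$ are each also upper semicontinuous, hence continuous; an integer-valued continuous function on the connected set $\Bbb R^n\times(\Bbb R^n\setminus\{0\})$ (connected once $n\ge2$; when $n=1$ the rank depends only on $x$ by homogeneity, and $\Bbb R$ is connected) is constant. This gives (iv) and (v).

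For the converse, assume (i)--(v). Then $\Q\Po=0$ is (i), which is half of the definition, so it remains only to establish exactness of the symbol complex at $\V$ for all $x$ and all $\xi\neq0$. From (i) --- equivalently from (iii) --- we get $\Q_m(x,\xi)\Po_m(x,\xi)=0$, hence $\im\Po_m(x,\xi)\subseteq\Nl\Q_m(x,\xi)$ everywhere. Write $r$ and $s$ for the ranks of $\Po_m$ and $\Q_m$, which are constant by (iv) and (v). Evaluating at the distinguished point $(y,\z)$ supplied by (ii), exactness at $\V$ gives $r=\dim\Nl\Q_m(y,\z)=\dim\V-s$, so $r+s=\dim\V$ identically. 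Therefore, for every $x$ and every $\xi\neq0$,
\[
\dim\im\Po_m(x,\xi)=r=\dim\V-s=\dim\Nl\Q_m(x,\xi),
\]
and a subspace contained in another of equal dimension coincides with it, so $\im\Po_m(x,\xi)=\Nl\Q_m(x,\xi)$. Hence the symbol complex is exact at $\V$ everywhere and the sequence is an elliptic complex.

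I expect the main obstacle to be the semicontinuity step in the forward direction: promoting pointwise exactness to the \emph{constant}-rank conclusions (iv) and (v) is exactly what \autoref{lsc} is designed for, and the one point needing a little care is the connectedness of the parameter space $\Bbb R^n\times(\Bbb R^n\setminus\{0\})$ --- immediate for $n\ge2$ and reducible to connectedness of $\Bbb R$ when $n=1$. Everything else is routine rank--nullity bookkeeping together with the elementary fact that leading-order cancellation in $\Q\Po$ is equivalent to the vanishing of the product of principal symbols.
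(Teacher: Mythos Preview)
Your argument is correct and follows the same route as the paper: (i)--(iii) come straight from the definition and from expanding $\Q_m(x,\xi)\Po_m(x,\xi)$ as a polynomial in $\xi$, while (iv)--(v) follow by combining the rank--nullity identity $\rank\Po_m+\rank\Q_m=\dim\V$ with lower semicontinuity of rank (\autoref{lsc}) to force continuity, hence constancy, of each rank; the converse is the same dimension count transported from the exact point $(y,\z)$ by constant rank. Your treatment is in fact slightly more careful than the paper's in one respect: you make explicit the connectedness of $\Bbb R^n\times(\Bbb R^n\setminus\{0\})$ needed to pass from ``continuous integer-valued'' to ``constant'', and you handle the $n=1$ case separately via homogeneity, a point the paper passes over in silence.
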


\begin{proof}
We will begin by showing that an elliptic complex has the stated properties.  (i) and (ii) follow trivially from the definition.  Note that \begin{eqnarray}\label{EC} \Q_m (x,\xi) \Po_m (x,\xi) & = & \sum_{\abs\bt=m} \sum_{\abs\al=m} B_\bt(x) \xi^\bt A_\al(x) \xi^\al \\ \nonumber & = & \sum_{\abs\g =2m} \xi^\g \sum_{\substack{\al+\bt=\g \\ \abs\al = \abs\bt =m}} B_\bt(x) A_\al(x).\end{eqnarray}  Since this is true as functions of $\xi$ and $\Q_m(x,\xi) \Po_m(x,\xi)=0$, we get (iii) be equating coefficients of $\xi^\g$.  Since the $A_\al(x)$ and $B_bt(x)$ are continuous, we get that $\Po_m$ and $\Q_m$ are continuous, so $\rank \Po_m$ and $\rank \Q_m$ are lower semicontinuous by \autoref{lsc}.  By the rank-nullity theorem and the fact that the symbol complex is exact, we get \begin{equation}\label{r-n} \rank \Q_m(x,\xi) = \dim \V - \Nl\Q_m(x,\xi) = \dim\V - \rank\Po_m(x,\xi)\end{equation} as long as $\xi\neq 0$.  Thus, $\rank \Q_m$ is upper semicontinuous since $\rank \Po_m$ is lower semicontinuous away from $\xi=0$.  Therefore it is continuous.  And, since it is valued in a discrete set, we get (v).  Then, (iv) follows by \autoref{r-n}.

Now, we will assume that properties (i) through (iv) hold and show that the complex is elliptic.  \autoref{EC} and (iii) give us that the composition $\Q_m\Po_m$ is identically $0$, which means that $\im\Po_m \subseteq \ker \Q_m$.  Now, consider \begin{eqnarray*}\rank \Po_m(x,\xi) & = & \rank \Po_m (y,\z) = \Nl\Q_m(y,\z) \\ & = & \dim \V - \rank\Q_m(y,\z) = \dim\V - \rank\Q_m(x,\xi) \\ & = & \Nl\Q_m(x,\xi).\end{eqnarray*}  So, as long as $\xi\neq0$ so that we have the constant rank necessary for the first and fourth equality, we have exactness.
\end{proof}

\bibliographystyle{amsplain}
\bibliography{poincare_arxiv}

\end{document}